\author{Fabio Gobbi \hspace{0.4cm} Sabrina Mulinacci\\
\vspace{0.4cm} University of Bologna - Department of Statistics}
\newtheorem{theorem}{Theorem}[section]
\newtheorem{proposition}{Proposition}[section]
\newtheorem{definition}[theorem]{Definition}
\begin{document}

\title{Gaussian autoregressive process with dependent innovations. Some asymptotic results.}

\maketitle

\begin{abstract}
In this paper we introduce a modified version of a gaussian
standard first-order autoregressive process where we allow for a
dependence structure between the state variable $Y_{t-1}$ and the
next innovation $\xi_t$. We call this
model dependent innovations gaussian AR(1) process
(DIG-AR(1)). We analyze the moment and temporal dependence properties
of the new model. After proving that the OLS estimator does not
consistently estimate the autoregressive parameter, we introduce an
infeasible estimator and we provide its $\sqrt{T}$-asymptotic
normality.

 \end{abstract}
\bigskip

 Mathematics Subject Classification
(2010): 62M10, 62F10 \vspace{0.2cm}

 {\bf Keywords}: autoregressive processes, dependent innovations, $\sqrt{T}$-asymptotic normality.

\medskip

\section{Introduction}

In this paper we consider a modified version of the standard first-order
autoregressive process, $Y_t=\phi Y_{t-1} + \xi_t$, in which, unlike the classical case, we assume
that the state variable at the time $t-1$, $Y_{t-1}$, and the next
innovation $\xi_t$ are no longer independent.

The model here considered is a particular case of the class of the C-convolution based processes studied in
Cherubini et al. (2011) and Cherubini et al. (2012). Recent literature has mainly focused on stationary copula-based Markov process
(see, among the others, Chen and Fan, 2006, and Beare, 2010), while the mentioned C-convolution based processes have a stationary dependence structure between
each level and the next innovation generating a process which no longer stationary. 

In particular, this paper focuses on the 
case where the pairs $(Y_{t-1},\xi_t)_t$ have a joint
distribution function given by a gaussian copula with a
time-invariant parameter $\rho$ and the innovations $(\xi_t)_t$ are
identically distributed with a gaussian distribution: in addition, we assume that the resulting stochastic process is a Markov process. 
This model first appeared in
Cherubini et al. (2016) where some moment properties have been
analyzed without, however, considering temporal dependencies. The unit root case ($\phi=1$) is studied in 
Gobbi and Mulinacci (2017) while here we will concentrate on the case $\vert \phi\vert <1$.
We call the model
``dependent innovations gaussian AR(1) process (DIG-AR(1))''. 

The analysis of the model starts from the study of the implied temporal
dependence properties both in the sequence $(Y_t)_t$ and in the
sequence of innovations $(\xi_t)_t$ which, as expected, are no
more independent. As a consequence, statistical inference on the
model changes significantly. In particular, estimating the
autoregressive parameter can no longer be made by ordinary least
squares (OLS). As we shall prove, the OLS
estimator is not consistent for $\phi$ and, as expected, the
asymptotic bias depends on $\rho$. To overcome this drawback, in
this paper we propose a new infeasible estimator of $\phi$ which
allows us to achieve a $\sqrt{T}$-consistency and asymptotic
normality. The new estimator takes into account a correction due
to the presence of $\rho$ whose importance will be emphasized both
as regards the estimation procedure and as regards the dynamics of
the process.

The paper is organized as follows. Section \ref{sec2} introduces the
DIG-AR(1) process. Section \ref{sec3} shows the autocorrelation functions
of $(Y_t)_t$ and $(\xi_{t})_t$. In Section \ref{sec4} we prove that the OLS
estimator is not consistent for the autoregressive parameter and we introduce a new infeasible estimator 
which is proved to be $\sqrt T$-asymptotically normal. Section \ref{sec5} concludes.

\section{The model}\label{sec2}
In this paper we consider a generalized version of the standard
stationary gaussian first-order autoregressive process, AR(1),
defined as $$Y_t=\phi Y_{t-1}+\xi_t,$$ where $Y_0=0$ a.s.,
$|\phi|<1$ (condition for the weak stationarity) and the sequence
of innovations $(\xi)_{t\geq 1}$ is i.i.d. and normally
distributed with zero mean and standard deviation $\sigma_{\xi}$
(gaussian white noise process): as a consequence, $\xi_t$ is
independent of $Y_{t-1}$ for all $t$. It is just the case to
recall that in this framework we have
$$\left\{%
\begin{array}{ll}
    \mathbb{E}[Y_t]=0, \ t\geq 1 \\
    \mathbb{E}[Y_t^{2}]=\frac{\sigma^2_{\xi}}{1-\phi^2}, \ t \geq 1  \\
    \eta_k=corr(Y_t,Y_{t+k})=\phi^k, \ k=1,2,...\\
\end{array}%
\right.
$$
For a detailed discussion of AR(1) processes see, among others,
Hamilton (1994) and Brockwell and Davis (1991). The modified
version of the AR(1) process considered in this paper is inspired
by the Markov processes modeling with dependent increments
considered in Cherubini et al. (2011, 2012, 2016): the idea is to
relax the assumption of independence between $\xi_t$ and
$Y_{t-1}$, allowing for some time-invariant gaussian dependence.

More precisely
\begin{definition}\label{DIG-AR}
A dependent innovations gaussian AR(1) process (DIG-AR(1)) is a discrete time stochastic process defined as
\begin{equation}\label{C_AR(1)}Y_t=\phi Y_{t-1}+\xi_t,\, t\in\mathbb N\end{equation}
where $Y_0=0$ a.s., $|\phi|<1$, $(\xi_t)_t$ are identically distributed with
$\xi_t\sim N(0,\sigma^2_{\xi})$ and the copula associated to the
vector $(\xi_t,Y_{t-1})$ is gaussian with time-invariant parameter $\rho$ with
$|\rho|<1$ for all $t$.
\end{definition}
\smallskip

Therefore, in this version of the model the sequence of
innovations is no more a gaussian white noise but it has a
temporal dependence structure that we will analyze in the
following. Moreover, in Section 4.3.1 of Cherubini et al.
(2016), the authors determine the variance of $Y_t$ and its
asymptotic convergence, that is
$$V_t^{2}=\sigma_{\xi}^2\left(\phi^{2(t-1)}+\sum_{i=1}^{t-1}\phi^{2(i-1)}\right)+2\rho \sigma_{\xi}
\sum_{i=1}^{t-1}\phi^{2i-1}V_{t-i},$$ and
$$V_t \underset{t\uparrow +\infty}\longrightarrow \frac{\sigma_{\xi}\left(\rho \phi +\sqrt{\rho^2 \phi^2+1-\phi^2}\right)}{1-\phi^2}.$$
The limit, that we denote with
$\bar{V}(\rho,\phi)$, can be written as
$$\bar{V}(\rho,\phi)=
S \frac{\left(\rho \phi +\sqrt{\rho^2
\phi^2+1-\phi^2}\right)}{\sqrt{1-\phi^2}},$$ where
$S=\frac{\sigma_{\xi}}{\sqrt{1-\phi^2}}$ is the standard deviation of the AR(1) process. It is just the case to
observe that, when $\rho=0$, $\bar{V}(0,\phi)=S$. As
shown in Figure 1, we can notice that $\bar{V}(\rho,\phi)>S$ if
$\rho \phi
>0$ whereas $\bar{V}(\rho,\phi)<S$ if $\rho \phi <0$. In plain
words, $Y_t$ becomes more and more volatile if the correlation
between $\xi_t$ and $Y_{t-1}$ has the same sign of the
autoregressive coefficient. On the other hand, the volatility
tends to decrease if they have opposite sign.

\begin{figure}[h]
\begin{center}
\includegraphics[width=10cm,height=8cm]{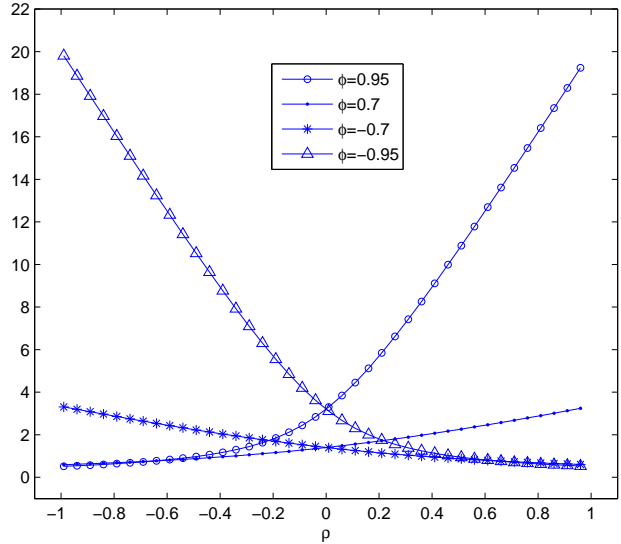}
\caption{Graph of $\bar{V}(\rho,\phi)$ as a function of $\rho$ for
different values of $\phi$. Here we fix
$\sigma_{\xi}=1$.}\label{fig1}
\end{center}
\end{figure}

In addition to the above assumptions, we assume that the stochastic process $(Y_t)_t$ is a Markov process.
As proved in Darsow et al.
(1992) a necessary condition for a process $(Y_t)_t$ to be Markovian is that, if $C_{s,t}$ is the copula associated to $(Y_s,Y_t)$, then
$$C_{s,t}(u,v)=C_{s,r}\ast C_{r,t}(u,v)=\int_0^1\frac{\partial}{\partial w}C_{s,r}(u,w)\frac{\partial}{\partial w}C_{r,t}(w,v)\, dw,\, \forall s<r<t.$$
Morover, as shown in section 3.2.3 in Cherubini et al. (2012), if copulas $C_{s,r}$ and $C_{r,t}$ are gaussian with parameters $\tau_{s,r}$
and $\tau_{r,t}$, respectively, then $C_{s,t}$ is again gaussian with parameter 
\begin{equation}\label{taustar}\tau_{s,t}=\tau_{s,r}\tau_{r,t}.\end{equation}

\section{Autocorrelation functions and mixing-properties}\label{sec3}

In this subsection we study the behavior of the autocorrelation
functions of the process $(Y_t)_t$ when $t \longrightarrow
+\infty$. It is just the case to recall that in the standard AR(1)
process the $k$-th order autocorrelation function of $(Y_t)_t$
depends only on $k$ and it is equal to $\phi^k$. In our more
general setting, this is no longer true. In section 4.3.1 of
Cherubini et al. (2016) it is shown that the copula between $Y_t$
and $Y_{t+1}$ is gaussian with time-dependent parameter given by
$$\tau_{t,t+1}=\frac{\phi V_t+\rho \sigma_{\xi}}{V_{t+1}},$$
and
\begin{equation}\label{tau1}\tau_{t,t+1}\underset{t\uparrow +\infty}\longrightarrow \phi+\frac{\rho
\sigma_{\xi}}{\bar{V}(\rho,\phi)}=\bar{\tau}(\rho,\phi).\end{equation}
\bigskip

The limit of the $k$-th order autocorrelation function of the
Markov process $(Y_t)_t$ is a function of $k$, $\sigma_\xi$ and $\rho$ as the
following proposition shows.

\begin{proposition} The $k$-th order autocorrelation function of the DIG-AR(1) Markov process $(Y_t)_t$ tends to $\left(\phi+\frac{\rho
\sigma_{\xi}}{\bar{V}(\rho,\phi)}\right)^k$ for any $k\geq 1$ as
$t \longrightarrow +\infty$.
\end{proposition}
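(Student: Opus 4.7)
The plan is to identify $\mathrm{corr}(Y_t, Y_{t+k})$ with the gaussian copula parameter $\tau_{t,t+k}$, factor this parameter as a product of one-step parameters via the Markov property, and then pass to the limit factor by factor using \eqref{tau1}.

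First, a short induction shows that each $Y_t$ is marginally gaussian. The base case $Y_1 = \xi_1 \sim N(0,\sigma_\xi^2)$ is immediate. If $Y_{t-1}$ is gaussian, then $(\xi_t, Y_{t-1})$ has gaussian marginals joined by a gaussian copula, hence is bivariate normal, so the linear combination $Y_t = \phi Y_{t-1} + \xi_t$ is again gaussian with mean $0$ and variance $V_t^2$.

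Next, we use the Markov structure. The paper recalls that $C_{t,t+1}$ is gaussian with parameter $\tau_{t,t+1} = (\phi V_t + \rho \sigma_\xi)/V_{t+1}$. By the Darsow-Nguyen-Olsen necessary condition and the composition rule \eqref{taustar}, an immediate induction on $k$ gives that $C_{t,t+k} = C_{t,t+1} \ast C_{t+1,t+2} \ast \cdots \ast C_{t+k-1,t+k}$ is itself gaussian with parameter
$$\tau_{t,t+k} = \prod_{i=0}^{k-1} \tau_{t+i,t+i+1}.$$
Since $Y_t$ and $Y_{t+k}$ are both gaussian and their copula is gaussian, the joint vector $(Y_t, Y_{t+k})$ is bivariate normal, and therefore its linear correlation coincides with the copula parameter: $\mathrm{corr}(Y_t, Y_{t+k}) = \tau_{t,t+k}$.

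Finally, for $k$ fixed, each of the $k$ factors $\tau_{t+i,t+i+1}$ with $i=0,\dots,k-1$ converges, as $t \to +\infty$, to $\bar\tau(\rho,\phi) = \phi + \rho \sigma_\xi / \bar V(\rho,\phi)$ by \eqref{tau1}. Hence the finite product converges to $\bar\tau(\rho,\phi)^k$, which is the claimed limit. The only delicate point is the identification between correlation and copula parameter; once marginal gaussianity and gaussianity of $C_{t,t+k}$ are in place, this is automatic, and the rest is a straightforward passage to the limit in a product of $k$ terms.
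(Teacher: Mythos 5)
Your proposal is correct and follows essentially the same route as the paper: factor $\tau_{t,t+k}$ into one-step gaussian copula parameters via the composition rule \eqref{taustar} and pass to the limit in each of the $k$ factors using \eqref{tau1}. The only difference is that you explicitly justify the identification $\mathrm{corr}(Y_t,Y_{t+k})=\tau_{t,t+k}$ through marginal gaussianity, a step the paper leaves implicit (and uses again in the next proposition), so this is a welcome but not substantively different addition.
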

\begin{proof}
Thanks to (\ref{taustar}),
we have that the copula between
$Y_{t}$ and $Y_{t+k}$ is gaussian with parameter
$$\tau_{t,t+k}=\prod_{s=0}^{k-1} \frac{\phi V_{t+s}+\rho \sigma_{\xi}}{V_{t+s+1}}.$$
Therefore, since for any $s\geq
1$ as $t \longrightarrow +\infty$
\begin{equation}\label{f1}\frac{V_{t+s}+\rho \sigma_{\xi}}{V_{t+s+1}}\longrightarrow
\frac{\bar{V}(\rho,\phi)+\rho \sigma_{\xi}}{\bar{V}(\rho,\phi)}=
\phi+\frac{\rho \sigma_{\xi}}{\bar{V}(\rho,\phi)},\end{equation}
we easily get the result.
\end{proof}
It is interesting to consider the role of $\rho$ in the dynamics
of the limit of $\tau_{t,t+k}$ with respect to the standard case
when $\tau_{t,t+k}=\phi^k$. If $\phi>0$ the limit autocorrelation
monotonically decreases to zero as $k$ increases, more slowly if
$\rho>0$ and more rapidly if $\rho<0$. On the contrary, if
$\phi<0$ the limit autocorrelation fluctuates more widely if
$\rho<0$ whereas the fluctuations are more blunt if $\rho>0$.

On the other hand, the innovations $(\xi_t)_t$ are of course no
longer serially independent as in the standard case and the $k$-th
order autocorrelation function approaches to a limit which again
depends on $\rho$, $\sigma_\xi$ and $k$.

\begin{proposition} Given the DIG-AR(1) Markov process of Definition \ref{DIG-AR}, the $k$-th order autocorrelation
function of $(\xi_t)_t$, $\delta_{t,t+k}$, for any $k\geq 1$, tends to
$$\delta_{t,t+k}\underset{t\uparrow +\infty}\longrightarrow\frac{\bar{V}(\rho,\phi)\bar{\tau}^{k-1}(\rho,\phi)}{\sigma_{\xi}^2}(\bar\tau(\rho,\phi)-\phi)(1-\phi\bar \tau(\rho,\phi))$$
where $\bar\tau$ is defined in (\ref{tau1}).
\end{proposition}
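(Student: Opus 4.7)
\emph{Proof plan.} I would start by writing the innovations as $\xi_t=Y_t-\phi Y_{t-1}$ and expanding the covariance bilinearly, so that, for every $k\geq 1$,
\begin{equation*}
cov(\xi_t,\xi_{t+k})=cov(Y_t,Y_{t+k})-\phi\,cov(Y_t,Y_{t+k-1})-\phi\,cov(Y_{t-1},Y_{t+k})+\phi^2\,cov(Y_{t-1},Y_{t+k-1}).
\end{equation*}
This reduces the problem to the computation of four covariances between levels of the process.

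The key observation is that each $Y_t$ is marginally gaussian. Indeed, $Y_0=0$ trivially, and inductively, if $Y_{t-1}\sim N(0,V_{t-1}^2)$, then the pair $(Y_{t-1},\xi_t)$ has gaussian margins and gaussian copula, hence is jointly normal, so $Y_t=\phi Y_{t-1}+\xi_t$ is gaussian as well. Combined with the fact that the copula of $(Y_s,Y_r)$ is gaussian with parameter $\tau_{s,r}$ (recalled in (\ref{taustar}) and used in the proof of the previous proposition), this makes $(Y_s,Y_r)$ jointly normal, so
\begin{equation*}
cov(Y_s,Y_r)=\tau_{s,r}V_sV_r.
\end{equation*}

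It then remains to let $t\to+\infty$ in the four covariances. Using $V_{t+j}\to\bar V(\rho,\phi)$ for every fixed $j$ and the same telescoping-product argument as in the previous proposition, $\tau_{t+a,t+b}\to\bar\tau(\rho,\phi)^{b-a}$ for every fixed $a<b$; the relevant exponents here are $k$, $k-1$, $k+1$ and $k$, matching the four terms above. Substituting and pulling out $\bar V^2\bar\tau^{k-1}$ leaves the polynomial identity
\begin{equation*}
\bar\tau-\phi-\phi\bar\tau^2+\phi^2\bar\tau=(\bar\tau-\phi)(1-\phi\bar\tau),
\end{equation*}
which is the only genuinely algebraic step of the argument; dividing through by $\sigma_\xi^2$ then delivers the stated limit for $\delta_{t,t+k}$. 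The main point to keep in view is this factorisation together with the correct index shifts in $\tau_{s,r}$ (so that $\tau_{t-1,t+k}$ tends to $\bar\tau^{k+1}$ and not to $\bar\tau^k$); otherwise everything reduces to bookkeeping on top of the previous proposition.
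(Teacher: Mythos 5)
Your proposal follows essentially the same route as the paper: expand $cov(\xi_t,\xi_{t+k})$ bilinearly into the four level covariances, write each as $\tau_{s,r}V_sV_r$, pass to the limit using $\tau_{t+a,t+b}\to\bar\tau^{\,b-a}$ and $V_t\to\bar V(\rho,\phi)$, and factor $\bar\tau-\phi-\phi\bar\tau^2+\phi^2\bar\tau=(\bar\tau-\phi)(1-\phi\bar\tau)$. Your explicit justification that $(Y_s,Y_r)$ is jointly normal (so that the covariance equals $\tau_{s,r}V_sV_r$) is a small welcome addition that the paper leaves implicit, and your index bookkeeping in fact corrects a slip in the paper's own wording, where $\tau_{t,t+k}\to\bar\tau^{k-1}$ should read $\bar\tau^{k}$.
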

\begin{proof}
We have
$$\begin{aligned}\mathbb{E}[\xi_t
\xi_{t+k}]&=\mathbb{E}[(Y_t-\phi Y_{t-1})(Y_{t+k}-\phi Y_{t+k-1})]=\\
&=\mathbb{E}[Y_t Y_{t+k}]-\phi\mathbb{E}[Y_t Y_{t+k-1}]-\phi
\mathbb{E}[Y_{t-1} Y_{t+k}]+\phi^2 \mathbb{E}[Y_{t-1}
Y_{t+k-1}]=\\
&=\tau_{t,t+k}V_t V_{t+k}-\phi \tau_{t,t+k-1}V_t
V_{t+k-1}-\phi\tau_{t-1,t+k}V_{t-1}
V_{t+k}+\phi^2\tau_{t-1,t+k-1}V_{t-1} V_{t+k-1}.\end{aligned}$$
Since for any fixed $k\geq 1$, $\tau_{t,t+k} \longrightarrow
\bar{\tau}^{k-1}(\rho,\phi)$ and $V_t\longrightarrow
\bar{V}(\rho,\phi)$ as $t \longrightarrow +\infty$ we get
$$\begin{aligned}\mathbb{E}[\xi_t\xi_{t+k}]\longrightarrow & \bar{V}^2(\rho,\phi)\left(\bar{\tau}^{k}(\rho,\phi)
-\phi \bar{\tau}^{k+1}(\rho,\phi)-\phi
\bar{\tau}^{k-1}(\rho,\phi)+\phi^2\bar{\tau}^{k}(\rho,\phi)\right)=\\&
=\bar{V}^2(\rho,\phi)\bar{\tau}^{k-1}(\rho,\phi)(\bar\tau(\rho,\phi)-\phi)(1-\phi\bar \tau(\rho,\phi)).
\end{aligned}$$
It follows
$$\delta_{t,t+k}\longrightarrow \frac{\bar{V}^2(\rho,\phi)\bar{\tau}^{k-1}(\rho,\phi)}{\sigma^2_{\xi}}(\bar\tau(\rho,\phi)-\phi)(1-\phi\bar \tau(\rho,\phi)).$$
\end{proof}

To conclude this section we briefly discuss the mixing properties of the
sequence $(Y_t)_t$. Given a (not necessarily stationary) sequence
of random variables $(X_t)_{t \in \mathbb{Z}}$, defined on a given probability space $(\Omega,\mathcal F,\mathbb P)$, let
$\mathcal{F}_{t}^{l}$ be the $\sigma$-field
$\mathcal{F}_{t}^{l}=\sigma(X_t,t\leq t \leq l)$ with $-\infty
\leq t \leq l \leq +\infty$ and let
\begin{equation}\label{beta1}\tilde{\beta}(\mathcal{F}_{-\infty}^{t},\mathcal{F}_{t+k}^{+\infty})=\sup_{\{A_i\},\{B_j\}}\frac{1}{2}\sum_{i=1}^I \sum_{j=1}^J|\mathbb{
P}(A_i\cap B_j)-\mathbb{P}(A_i)\mathbb{P}(B_j)|,\end{equation}
where the
second supremum is taken over all finite partitions
$\{A_1,...A_I\}$ and $\{B_1,...B_J\}$ of $\Omega$ such that $A_i
\in \mathcal{F}_{-\infty}^t$ for each $i$ and $B_j \in
\mathcal{F}_{t+k}^{\infty}$ for each $j$. Define the following
dependence coefficient
$$\beta_k=\sup_{t \in \mathbb{Z}}\tilde{\beta}(\mathcal{F}_{-\infty}^{t},\mathcal{F}_{t+k}^{+\infty}).$$
We say that the sequence $(X_t)_{t \in \mathbb{Z}}$ is
$\beta-$mixing (or absolutely regular) if $\beta_k \rightarrow 0$
as $k \rightarrow +\infty$ (see Volkonskii and Rozanov, 1959 and 1961, for more details).

\begin{proposition}\label{mix}
The DIG-AR(1) Markov process $(Y_t)_t$ is $\beta$-mixing.
\end{proposition}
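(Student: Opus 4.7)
The plan is to reduce the $\beta$-mixing coefficient to a two-variable total variation distance via the Markov property, and then to exploit the joint Gaussianity of $(Y_t,Y_{t+k})$ together with a uniform geometric decay of the correlations $\tau_{t,t+k}$ in $k$.

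First, since $(Y_t)_t$ is a Markov chain, a standard maximal-coupling argument (see, e.g., Bradley, 2005) yields the identity
$$\tilde\beta(\mathcal F_{-\infty}^t,\mathcal F_{t+k}^{+\infty})=\mathbb E\bigl\|\mathbb P(Y_{t+k}\in\cdot\mid Y_t)-\mathbb P(Y_{t+k}\in\cdot)\bigr\|_{TV}.$$
The past collapses to $Y_t$ by the Markov property, and once $Y_{t+k}$ has been matched in distribution, the tail after $t+k$ can be matched as well by Markov coupling. Next, an induction on $t$ shows that $(Y_0,Y_1,\ldots)$ is jointly Gaussian: if $(Y_0,\ldots,Y_t)$ is Gaussian, then Definition~\ref{DIG-AR} together with the Markov assumption force the conditional law of $\xi_{t+1}$ given the whole past to coincide with its law given $Y_t$, namely $N(\rho\sigma_\xi Y_t/V_t,\,\sigma_\xi^2(1-\rho^2))$, so $(Y_0,\ldots,Y_{t+1})$ remains Gaussian. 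Consequently $(Y_t,Y_{t+k})$ is centred bivariate Gaussian with correlation $\tau_{t,t+k}$, and combining Pinsker's inequality with the mutual information identity $I(Y_t;Y_{t+k})=-\tfrac12\log(1-\tau_{t,t+k}^2)$ produces
$$\tilde\beta(\mathcal F_{-\infty}^t,\mathcal F_{t+k}^{+\infty})\leq\sqrt{-\tfrac14\log\bigl(1-\tau_{t,t+k}^2\bigr)}.$$

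It remains to show that $\sup_{t\geq 0}|\tau_{t,t+k}|\to 0$ as $k\to+\infty$. Using $\mathrm{Cov}(Y_t,\xi_{t+1})=\rho\sigma_\xi V_t$ and taking variances of both sides of the DIG-AR(1) recursion yields
$$V_{t+1}^2=(\phi V_t+\rho\sigma_\xi)^2+\sigma_\xi^2(1-\rho^2),$$
so each factor in the product representation of $\tau_{t,t+k}$ given in the proof of Proposition~3.1 satisfies
$$\left(\frac{\phi V_{t+s}+\rho\sigma_\xi}{V_{t+s+1}}\right)^{2}=1-\frac{\sigma_\xi^2(1-\rho^2)}{V_{t+s+1}^2}.$$
Since $V_t\to\bar V(\rho,\phi)$, the sequence $(V_t)_{t\geq 1}$ is bounded by some finite $M$, so every factor above is bounded in absolute value by $r:=\sqrt{1-\sigma_\xi^2(1-\rho^2)/M^2}<1$, and thus $|\tau_{t,t+k}|\leq r^k$ uniformly in $t$. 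Combining this with the bound from the previous paragraph gives $\beta_k\to 0$ at a geometric rate, which establishes the claim.

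The main obstacle is the Markov-chain identity in the first step, which is a textbook fact but must be applied carefully to our non-stationary one-sided chain (reading $\mathcal F_{-\infty}^t$ as $\sigma(Y_s,\,0\leq s\leq t)$ since the process starts at $t=0$). Everything else is either the explicit Gaussian structure already exploited in Section~\ref{sec2} or elementary manipulation of the variance recursion.
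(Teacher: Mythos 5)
Your proof is correct, but it takes a genuinely different route from the paper's. The paper disposes of the proposition by invoking an external general result for non-stationary copula-based Markov processes (Theorem 2.1 and Corollary 3.1 of Gobbi and Mulinacci, 2017): it checks that $\hat\eta=\sup_t\vert\tau_{t,t+1}\vert<1$ and that the copula density of $(Y_t,Y_{t+1})$ is uniformly bounded in $L^2([0,1])$, and then cites that theorem. You instead give a self-contained argument tailored to the Gaussian structure: the Markov reduction $\tilde\beta(\mathcal F_{-\infty}^{t},\mathcal F_{t+k}^{+\infty})=\beta(\sigma(Y_t),\sigma(Y_{t+k}))$ (this is Theorem 7.3 in Bradley, 2007, and is worth citing precisely), then Pinsker's inequality combined with the Gaussian mutual information $-\tfrac12\log(1-\tau_{t,t+k}^2)$, and finally the identity $\tau_{t,t+1}^2=1-\sigma_\xi^2(1-\rho^2)/V_{t+1}^2$, which follows from the variance recursion and yields $\sup_t\vert\tau_{t,t+k}\vert\le r^k$ with $r<1$. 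All the steps check out: the induction establishing joint Gaussianity is valid (though for the mutual-information step you only need the pair $(Y_t,Y_{t+k})$ to be bivariate Gaussian, which already follows from the paper's equation (\ref{taustar}) since a Gaussian copula with Gaussian marginals gives a bivariate normal law), the variance recursion $V_{t+1}^2=(\phi V_t+\rho\sigma_\xi)^2+\sigma_\xi^2(1-\rho^2)$ agrees with the formula in Section \ref{sec2}, and $M=\sup_t V_t<\infty$ because $V_t$ converges. What your approach buys is independence from the cited reference and, more substantively, an explicit geometric mixing rate $\beta_k=O(r^k)$, which the paper's qualitative statement does not provide; what it costs is that it is specific to the Gaussian case, whereas the cited theorem covers a wider class of copula-based processes. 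The only point deserving a little extra care is the $t=0$ term in $\beta_k=\sup_t\tilde\beta(\cdot,\cdot)$, which is trivially zero since $Y_0=0$ a.s., so the uniform bound over $t\ge 1$ suffices.
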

\begin{proof}
The result is an immediate consequence of Theorem 2.1 in Gobbi and Mulinacci (2017) and the proof is very close to that of Corollary 3.1 therein.
In fact, since $\vert \tau_{t,t+1}\vert <1$ for all $t$ and its limit, $\phi+\frac{\rho\sigma_\epsilon}{\bar V(\rho,\phi)}$, satisfies
$\left\vert \phi+\frac{\rho\sigma_\epsilon}{\bar V(\rho,\phi)}\right\vert <1$, we have that $\hat\eta=\underset{t}\sup \vert\tau_{t,t+1}\vert<1$.
Of course, exactly as shown in the proof of Corollary 3.1 of Gobbi and Mulinacci (2017) the density of the copula associated to the vector $(Y_t,Y_{t+1})$ is uniformly bounded
in $L^2([0,1])$ and Theorem 2.1 of Gobbi and Mulinacci (2017) applies, allowing to conclude that the process is $\beta$-mixing.
\end{proof}

\section{An infeasible estimator of the autoregressive parameter}\label{sec4}
This section is devoted to the analysis of the consistency of the ordinary
least squares (OLS) estimator, $\hat{\phi}_T$, of the
autoregressive coefficient $\phi$ in our DIG-AR(1) markovian
model. Unfortunately, unlike the standard case, the OLS estimator is no more consistent. Therefore, we propose an infeasible estimator which is by construction
consistent and that it is proved to be $\sqrt T$-asymptotically normal.
\medskip

We recall that given a random sample $(Y_1,....,Y_T)$
generated by $(Y_t)_t$, the OLS estimator is
$\hat{\phi}_T=\frac{\sum_{t=2}^{T}Y_t Y_{t-1}}{\sum_{t=2}^{T}
Y^2_{t-1}}$. In the standard AR(1) model, since $(Y_t)_t$ is a
stationary and ergodic process we have that $\hat{\phi}_T
\stackrel{a.s.}{\longrightarrow} \phi$ as $T \longrightarrow
+\infty$ (see, among others, Anderson, 1971 and Hamilton, 1994). This is no more the case in our more general context.

\begin{proposition}\label{pp1} Let $(Y_t)_t$ be a markovian DIG-AR(1) process as in Definition \ref{DIG-AR}. Then, the OLS estimator
$\hat{\phi}_T$ of $\phi$ is not consistent; in particular
$$\hat{\phi}_T
\stackrel{a.s.}{\longrightarrow} \bar\tau(\rho,\phi)$$
where $\bar \tau(\rho,\phi)$ is defined in (\ref{tau1}).
\end{proposition}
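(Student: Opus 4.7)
My plan is to write $\hat\phi_T=N_T/D_T$ where $N_T=\sum_{t=2}^T Y_tY_{t-1}$ and $D_T=\sum_{t=2}^T Y_{t-1}^2$, and to show separately that
\[\frac{D_T}{T}\stackrel{a.s.}\longrightarrow \bar V^2(\rho,\phi),\qquad \frac{N_T}{T}\stackrel{a.s.}\longrightarrow \bar\tau(\rho,\phi)\,\bar V^2(\rho,\phi).\]
Since $\bar V(\rho,\phi)>0$, the ratio then yields $\hat\phi_T\stackrel{a.s.}\to\bar\tau(\rho,\phi)$.

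The first step is to handle the expectations. Because $Y_t\sim N(0,V_t^2)$ and the copula of $(Y_{t-1},Y_t)$ is gaussian with parameter $\tau_{t-1,t}$, one has $\mathbb E[Y_{t-1}^2]=V_{t-1}^2$ and $\mathbb E[Y_tY_{t-1}]=\tau_{t-1,t}V_{t-1}V_t$. From $V_t\to\bar V(\rho,\phi)$ and from $\tau_{t-1,t}\to\bar\tau(\rho,\phi)$ (formula (\ref{tau1})), Cesàro's lemma gives
\[\frac{1}{T}\sum_{t=2}^T \mathbb E[Y_{t-1}^2]\longrightarrow \bar V^2(\rho,\phi),\qquad \frac{1}{T}\sum_{t=2}^T \mathbb E[Y_tY_{t-1}]\longrightarrow \bar\tau(\rho,\phi)\bar V^2(\rho,\phi).\]

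The second and main step is to show that the centered averages vanish almost surely, i.e.\ $T^{-1}(D_T-\mathbb E D_T)\to 0$ and $T^{-1}(N_T-\mathbb E N_T)\to 0$ a.s. Here I would exploit two ingredients: (a) since each $Y_t$ is gaussian with $\sup_t V_t<\infty$, the sequences $(Y_{t-1}^2)_t$ and $(Y_tY_{t-1})_t$ have uniformly bounded moments of every order; (b) by Proposition \ref{mix}, $(Y_t)_t$ is $\beta$-mixing, a property inherited by any measurable functional of a finite block of the chain, and which in our markovian gaussian-copula setting actually holds with geometric rate (as in the proof of Corollary 3.1 of Gobbi--Mulinacci (2017), which is referenced). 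With these two ingredients a covariance inequality for $\beta$-mixing sequences (Rio-type) yields $\operatorname{Var}(D_T)=O(T)$ and $\operatorname{Var}(N_T)=O(T)$; then Chebyshev plus Borel--Cantelli along a subsequence $T_n=n^2$, combined with a simple monotonicity control (for $D_T$) and a fourth-moment gap bound (for $N_T$) to fill in between $T_n$ and $T_{n+1}$, promotes convergence in probability to a.s.\ convergence.

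The main obstacle is precisely this passage from moment convergence to a.s.\ convergence in the non-stationary regime: one cannot directly invoke Birkhoff's ergodic theorem because the chain is not stationary (only asymptotically so). However, the uniform Gaussian bounds on moments together with the geometric $\beta$-mixing are exactly what is needed to apply a strong law for non-stationary weakly dependent sequences. Once this step is established, combining it with the Cesàro limits computed above and passing to the ratio $N_T/D_T$ finishes the proof.
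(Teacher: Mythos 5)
Your route is sound but genuinely different from the paper's. You attack $\hat\phi_T=N_T/D_T$ head-on, computing $\mathbb E[Y_tY_{t-1}]=\tau_{t-1,t}V_{t-1}V_t$ from the gaussian copula plus gaussian marginals, taking Ces\`aro limits, and then proving a strong law for the non-stationary $\beta$-mixing sequences $(Y_{t-1}^2)_t$ and $(Y_tY_{t-1})_t$ via a covariance inequality, Chebyshev--Borel--Cantelli along $T_n=n^2$, and gap control. The paper instead never touches $N_T$ directly: it writes $\hat\phi_T-\phi=\sum\xi_tY_{t-1}/\sum Y_{t-1}^2$, uses the Markov property to identify the conditional law $(\xi_t\mid Y_{t-1})\sim N\bigl(\tfrac{\rho\sigma_\xi}{V_{t-1}}Y_{t-1},\sigma_\xi^2(1-\rho^2)\bigr)$, and splits $\xi_tY_{t-1}$ into a martingale difference $Z_t$ plus the deterministic-in-form bias term $\rho\sigma_\xi Y_{t-1}^2/V_{t-1}$. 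This buys two things you forgo: the martingale SLLN (White, Theorem 3.77) handles $\sum Z_t/T\to 0$ with only bounded second moments and no subsequence gymnastics, leaving the mixing-based SLLN (White, Theorem 3.57) to be applied only to $(Y_{t-1}^2)_t$ and $(Y_{t-1}^2/V_{t-1})_t$; and the same decomposition exhibits the bias as exactly $\rho\sigma_\xi\bar V^{-1}$ and is recycled verbatim to build the corrected estimator $\tilde\phi_T$ and prove its asymptotic normality in the next theorem, which your approach would not feed into. One caveat on your key step: a Rio-type covariance bound giving $\operatorname{Var}(N_T)=O(T)$ needs a summable (e.g.\ geometric) mixing rate, whereas Proposition \ref{mix} as stated only gives $\beta_k\to 0$; you correctly flag that the geometric rate comes from $\sup_t|\tau_{t,t+1}|<1$ in Gobbi--Mulinacci (2017), but that rate must be made explicit for your variance bound to stand (the paper has the analogous obligation when invoking White's Theorem 3.57, which also requires mixing of a specified size). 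With that rate supplied, your argument closes.
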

Let's observe that, coherently with the standard AR(1) process,
the OLS estimator converges to the limit of the first-order
autocorrelation. Unfortunately, in our DIG-AR(1) model such a
first-order autocorrelation does not coincide with the
autoregressive parameter.
\begin{proof} First we observe that the assumption of markovianity
of $(Y_t)_t$ implies that, if $(\mathcal F_t)_t$ is the filtration generated by $(Y_t)_t$,
then $\mathbb P(\xi_t\leq x\vert \mathcal F_{t-1})=\mathbb P(\xi_t\leq x\vert Y_{t-1})$ for all $x\in\mathbb R$ and, thanks to the assumptions of the model, the conditional distribution of $\xi_t$ given $Y_{t-1}$ is
\begin{equation}\label{conddist}(\xi_t|Y_{t-1})\sim N\left(\frac{\rho
\sigma_{\xi}}{V_{t-1}}Y_{t-1},\sigma_{\xi}^2(1-\rho^2)\right).\end{equation}
Now, consider the expression of $\hat{\phi}_T$,
$$\hat{\phi}_T=\frac{\sum_{t=2}^{T}Y_t Y_{t-1}}{\sum_{t=2}^{T}
Y^2_{t-1}}=\phi+\frac{\sum_{t=2}^{T}\xi_t Y_{t-1}}{\sum_{t=2}^{T}
Y^2_{t-1}}.$$ Differently from the standard case (see Andrews,
1988 and Hamilton, 1994) in our setting the ratio
$\frac{\sum_{t=2}^{T}\xi_t Y_{t-1}}{\sum_{t=2}^{T} Y^2_{t-1}}$
does not converge to zero as $T \longrightarrow +\infty$. In particular,
the process $(\xi_t Y_{t-1})_{t}$ is not a martingale difference
sequence and its mean is not constant over time,
$\mathbb{E}[\xi_t Y_{t-1}]=\rho \sigma_{\xi}V_{t-1}$. However, the
process
\begin{equation}\label{md}Z_t=\xi_t Y_{t-1}-Y_{t-1}\mathbb{E}[\xi_t |Y_{t-1}]\end{equation}
is a martingale difference
sequence with respect to the filtration $(\mathcal F_t)_t$ and, since thanks to equation (\ref{conddist}),
$Y_{t-1}\mathbb{E}[\xi_t |Y_{t-1}]=\rho
\sigma_{\xi}\frac{Y^2_{t-1}}{V_{t-1}}$, we have
\begin{equation}\label{OLS}\hat{\phi}_T-\phi=\frac{\sum_{t=2}^{T}Z_t}{\sum_{t=2}^{T}
Y^2_{t-1}}+\rho \sigma_{\xi}
\frac{\sum_{t=2}^{T}\frac{Y_{t-1}^2}{V_{t-1}}}{\sum_{t=2}^{T}
Y^2_{t-1}}.\end{equation}

\noindent Since, thanks to (\ref{conddist}),
\begin{equation}\label{sigmas}\mathbb{E}\left[Z_t^2\right]=\mathbb{E}\left[\mathbb{E}\left[Z_t^2|Y_{t-1}\right]\right]=\mathbb{E}\left[\sigma_{\xi}^2 Y_{t-1}^2
(1-\rho^2)\right]=\sigma_{\xi}^2
V^2_{t-1}(1-\rho^2)\end{equation}
and $V_{t}$ is a
convergent deterministic sequence and hence bounded by a positive constant $\Delta$, we have that
$$\sup_t\mathbb{E}\left[Z_t^2\right]=\sigma_{\xi}^2(1-\rho^2)\sup_{t}
V_{t-1}^2\leq \sigma_{\xi}^2(1-\rho^2)\Delta^2<+\infty.$$ Since
the sequence of the second moments of $ Z_t$ is bounded, we
can apply Theorem 3.77 in White (1984) and conclude that
$$\frac{1}{T}\sum_{t=2}^T Z_t
\stackrel{a.s.}{\longrightarrow} 0,$$ as $T \longrightarrow
+\infty$.

Notice that, since $(Y_t)_t$ is $\beta$-mixing (see Proposition \ref{mix}), also $(Y^2_{t})_t$ is $\beta$-mixing.\\
Moreover, (see Bradley, 2007) for a Markov
process $(X_t)_t$, (\ref{beta1}) can be rewritten as
$$\tilde{\beta}(\mathcal{F}_{-\infty}^{t},\mathcal{F}_{t,t+k}^{+\infty})= \frac{1}{2}\parallel
F_{t,t+k}(x,y)-F_t(x)F_{t+k}(y)\parallel_{TV}$$
where $\parallel\cdot\parallel _{TV}$ is the total variation norm in the Vitali sense and $F_{t,t+k}$, $F_{t}$ and $F_{t+k}$
are the cumulative distribution functions of $(X_t,X_{t+k})$, $X_{t}$ and $X_{t+k}$. If $(a_t)_t$ is a sequence of positive constants, it can be
easily shown that, as a consequence of the Vitali's total variation norm definition, the coefficient $\tilde{\beta}(\mathcal{F}_{-\infty}^{t},\mathcal{F}_{t,t+k}^{+\infty})$ calculated for the Markov process $\left (a_tX_t\right )_t$ is exactly
the same as that calculated for $(X_t)_t$.
This allows to conclude that  $\left(\frac{Y^2_{t-1}}{V_{t-1}}\right)_t$ is $\beta$-mixing.

Since $\sup_t\mathbb{E}\left[Y_{t-1}^4\right]=3\sup_t  V_{t-1}^{4}$ and
$\sup_t\mathbb{E}\left[\frac{Y_{t-1}^4}{V_{t-1}}\right]=3\sup_t
V_{t-1}^{2}$ are both finite, thanks to Theorem 3.57 in
White (1984), we have that
$$\frac{1}{T}\sum_{t=2}^T Y^2_{t-1}-\frac{1}{T}\sum_{t=2}^T V^2_{t-1}\stackrel{a.s.}{\longrightarrow} 0$$
and
$$\frac{1}{T}\sum_{t=2}^T \frac{Y^2_{t-1}}{V_{t-1}}-\frac{1}{T}\sum_{t=2}^T V_{t-1}\stackrel{a.s.}{\longrightarrow} 0.$$
But, thanks to Ces\`{a}ro averages theorem(see
appendix A30 in Billingsley, 1968), we have that
$\frac{1}{T}\sum_{t=2}^T V^2_{t-1} \longrightarrow
\bar{V}^2(\rho,\phi)$ and $\frac{1}{T}\sum_{t=2}^T
V_{t-1}\longrightarrow \bar{V}(\rho,\phi)$ and
\begin{equation}\label{ca}\frac{1}{T}\sum_{t=2}^T Y^2_{t-1}\stackrel{a.s.}{\longrightarrow}
\bar{V}^2(\rho,\phi)\text{ and }\frac{1}{T}\sum_{t=2}^T
\frac{Y^2_{t-1}}{V_{t-1}}\stackrel{a.s.}{\longrightarrow}
\bar{V}(\rho,\phi).\end{equation}
Substituting in (\ref{OLS}) we get the
conclusion.

\end{proof}
It is interesting to analyze the impact of the correlation coefficient $\rho$
on the asymptotic bias of the OLS estimator for different and
fixed values of the autoregressive parameter $\phi$. In
particular, Figure 2 shows the graph of the asymptotic bias
$\bar{\tau}(\rho,\phi_0)-\phi_0=\frac{\rho
\sigma_{\xi}}{\bar{V}(\rho,\phi_0)}$: we observe
that if $\rho$ and $\phi_0$ have the same sign the bias is low and
decreases if the value of the autoregressive parameter raises in
absolute value; on the other hand, if $\rho$ and $\phi_0$ have
opposite sign the bias is high and tends to increase with the
absolute value of the correlation.

\begin{figure}[h]
\begin{center}
\includegraphics[width=10cm,height=8cm]{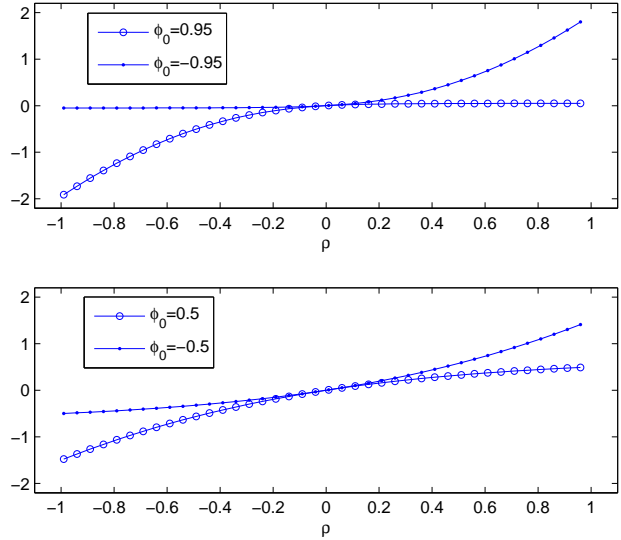}
\caption{Graph of the asymptotic bias
$\bar{\tau}(\rho,\phi_0)-\phi_0=\frac{\rho
\sigma_{\xi}}{\bar{V}(\rho,\phi_0)}$ as a function of $\rho$ for
different values of the autoregressive parameter $\phi_0$ Here we
fix $\sigma_{\xi}=1$.}\label{scatterCDB_RC}
\end{center}
\end{figure}

Thanks to Proposition \ref{pp1} we can introduce a new
infeasible estimator $\tilde\phi$ of $\phi$, which is consistent by construction, defined as
\begin{equation}\label{infeasibleestimator}
\tilde{\phi}_T=\hat{\phi}_T-\rho
\sigma_{\xi}\frac{\sum_{t=2}^T\frac{Y_{t-1}^2}{V_{t-1}}}{\sum_{t=2}^T
Y_{t-1}^2}
\end{equation}
where $\hat\phi$ is the OLS estimator of $\phi$.
\medskip

The following proposition provides a $\sqrt{T}$-asymptotic
normality for $\tilde{\phi}_T$.
\begin{theorem} Let $(Y_t)_t$ be a markovian DIG-AR(1) process as in Definition \ref{DIG-AR} and $\tilde{\phi}_T$ be the infeasible
estimator given in (\ref{infeasibleestimator}). Then
$$\sqrt{T}(\tilde{\phi}_T-\phi)\stackrel{L}{\longrightarrow} X\sim N(0,\bar{\eta}),$$
where
$\bar{\eta}=\frac{\sigma_{\xi}\sqrt{1-\rho^2}}{\bar{V}(\rho,\phi)}$ and $\stackrel{L}{\longrightarrow}$ denotes the convergence in distribution.
\end{theorem}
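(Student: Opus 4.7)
The plan is to substitute the infeasible-estimator correction into the OLS decomposition (\ref{OLS}), reduce the claim to a martingale-difference central limit theorem for the numerator $\sum_{t=2}^T Z_t$, and then apply Slutsky's lemma using the almost-sure limit of the denominator already established in (\ref{ca}).

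First, I would observe that definition (\ref{infeasibleestimator}) was engineered precisely so that the deterministic-drift term in (\ref{OLS}) cancels exactly, leaving
$$\sqrt{T}(\tilde{\phi}_T - \phi)=\frac{T^{-1/2}\sum_{t=2}^T Z_t}{T^{-1}\sum_{t=2}^T Y_{t-1}^2}.$$
By (\ref{ca}) the denominator converges a.s. to $\bar{V}^2(\rho,\phi)$, so by Slutsky it suffices to show that the numerator converges in law to a centered normal with variance $\sigma_\xi^2(1-\rho^2)\bar{V}^2(\rho,\phi)$; dividing by $\bar{V}^4(\rho,\phi)$ then yields $\sigma_\xi^2(1-\rho^2)/\bar{V}^2(\rho,\phi)$, matching $\bar{\eta}$ (read as the variance of the limit law).

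Second, the sequence $(Z_t)_t$ is already known from the proof of Proposition \ref{pp1} to be a martingale difference with respect to the natural filtration $(\mathcal{F}_t)_t$. To apply a standard MDS central limit theorem (for instance Theorem 3.2 of Hall and Heyde, 1980), I would verify two ingredients. For the conditional-variance hypothesis, since $(\xi_t\mid Y_{t-1})\sim N(\rho\sigma_\xi Y_{t-1}/V_{t-1},\,\sigma_\xi^2(1-\rho^2))$ by (\ref{conddist}), I get $\mathbb{E}[Z_t^2\mid\mathcal{F}_{t-1}]=\sigma_\xi^2(1-\rho^2)Y_{t-1}^2$, and hence, using (\ref{ca}) once more,
$$\frac{1}{T}\sum_{t=2}^T \mathbb{E}[Z_t^2\mid\mathcal{F}_{t-1}]\ \xrightarrow{a.s.}\ \sigma_\xi^2(1-\rho^2)\bar{V}^2(\rho,\phi).$$
For the Lyapunov hypothesis, conditionally on $Y_{t-1}$, $Z_t/Y_{t-1}$ is centered Gaussian with variance $\sigma_\xi^2(1-\rho^2)$, so $\mathbb{E}[Z_t^4]=3\sigma_\xi^4(1-\rho^2)^2\mathbb{E}[Y_{t-1}^4]=9\sigma_\xi^4(1-\rho^2)^2 V_{t-1}^4$. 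Since $V_t$ is convergent and therefore bounded, $\sup_t\mathbb{E}[Z_t^4]<\infty$ and $T^{-2}\sum_{t=2}^T\mathbb{E}[Z_t^4]\to 0$, which furnishes the Lindeberg condition in its Lyapunov form with $\delta=2$.

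The main obstacle I expect is justifying that the MDS CLT actually applies in this non-stationary regime, where the conditional variances depend on $t$ through the deterministic factor $V_{t-1}^2$ and the process $(Y_t)_t$ is not stationary. This is however exactly what the Cesàro-type statement (\ref{ca}) is designed to handle: once the time average of conditional variances converges a.s.\ to a deterministic limit and the fourth moments are uniformly bounded, the CLT applies verbatim. Combining it with the a.s.\ limit of the denominator through Slutsky's lemma then delivers the announced asymptotic normality.
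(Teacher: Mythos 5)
Your proof is correct and follows essentially the same route as the paper's: the identical reduction of $\sqrt{T}(\tilde{\phi}_T-\phi)$ to the ratio $\bigl(T^{-1/2}\sum_{t=2}^T Z_t\bigr)/\bigl(T^{-1}\sum_{t=2}^T Y_{t-1}^2\bigr)$, a martingale-difference CLT for the numerator, and Slutsky combined with the a.s.\ limit (\ref{ca}) for the denominator. The only (minor) divergence is the CLT variant invoked: the paper uses Proposition 7.8 of Hamilton (1994), which forces it to verify $\frac{1}{T}\sum_{t=2}^T Z_t^2\stackrel{\mathbb P}{\to}\bar{\sigma}^2$ via the auxiliary martingale difference $W_t=Z_t^2-\sigma_\xi^2(1-\rho^2)Y_{t-1}^2$, whereas your conditional-variance formulation obtains the analogous hypothesis directly from (\ref{ca}); note also that your limit variance $\sigma_\xi^2(1-\rho^2)/\bar{V}^2(\rho,\phi)$ equals $\bar{\eta}^2$, which is consistent with the statement once one reads the paper's $N(0,\bar{\eta})$ as parametrized by the standard deviation rather than the variance.
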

\begin{proof}
By (\ref{OLS}) we have
\begin{equation}\label{statement}\sqrt{T}(\tilde{\phi}_T-\phi)=\frac{\frac{1}{\sqrt{T}}{\sum_{t=2}^T Z_t}}{\frac{1}{T}\sum_{t=2}^T Y_{t-1}^2}.\end{equation}
where $(Z_t)_t$ is the martingale difference defined in (\ref{md}). Since, see (\ref{ca}), $\frac{1}{T}\sum_{t=2}^T
Y_{t-1}^2\stackrel{a.s.}{\longrightarrow} \bar{V}^2(\rho,\phi)$, in order to prove the statement, we
apply the central limit theorem of Proposition 7.8 in Hamilton (1994) to the numerator in (\ref{statement}).
Let us show that the assumptions of that Proposition are satisfied.

Let $\sigma_{t}^2=\mathbb{E}\left[Z_t^{2}\right]$: thanks to (\ref{sigmas}) and the assumptions of the model, $\sigma_{t}^2>0$ for all $t$
and, using Ces\`{a}ro averages theorem,
$$\frac{1}{T}\sum_{t=2}^T\sigma_t^{2} =\sigma_{\xi}^2
(1-\rho^2)\frac{1}{T}\sum_{t=2}^T V^2_{t-1}\longrightarrow
\sigma_{\xi}^2 (1-\rho^2)\bar{V}^2(\rho,\phi)=\bar{\sigma}^2,$$
which is strictly positive for all $|\rho|<1$ and all $|\phi|<1$. Moreover,
it is a straightforward computation to show that
$\sup_t\mathbb{E}\left[Z_t^4\right]<+\infty$ and, in order to apply the mentioned central limit theorem, it remains to prove that $\frac{1}{T}\sum_{t=2}^T
Z_t^{2}\stackrel{\mathbb{P}}{\longrightarrow}
\bar{\sigma}^2$.

Since
$\mathbb{E}\left[Z_t^{2}|Y_{t-1}\right]=\sigma_{\xi}^2Y_{t-1}^2(1-\rho^2)$,
the process $(W_t)_t$ defined as
$$W_t=Z_t^{2}-\sigma_{\xi}^2Y_{t-1}^2(1-\rho^2)$$
is a martingale difference sequence with respect to the filtration $(\mathcal F_t)_t$ generated by the process $(Y_t)_t$. Moreover, since it can be easily shown that
$\sup_t
\mathbb{E}\left[W_t^2\right]<+\infty$, we can apply Theorem 3.77 in White (1984) to the martingale difference $(W_t)_t$ and conclude that
$$\frac{1}{T}\sum_{t=2}^TW_t
\stackrel{a.s.}{\longrightarrow} 0.$$
Therefore
$$\frac{1}{T}\sum_{t=2}^T Z_t^{2}=
\frac{1}{T}\sum_{t=2}^TW_t+\sigma_{\xi}^2(1-\rho^2)\frac{1}{T}\sum_{t=2}^T
Y_{t-1}^2 \stackrel{a.s.}{\longrightarrow}\bar{\sigma}^2,$$ as
required.
Hence, by Proposition 7.8 in Hamilton (1994), we have that
$\frac{1}{\sqrt{T}}\sum_{t=2}^T Z_t$ converges in distribution to a random variable distributed as 
$N(0,\bar{\sigma})$
and, in conclusion,
$$\sqrt{T}(\tilde{\phi}_T-\phi)\stackrel{L}{\longrightarrow} X
\sim
N\left(0,\frac{\sigma_{\xi}\sqrt{1-\rho^2}}{\bar{V}(\rho,\phi)}\right),$$
which is the statement of the theorem.
\end{proof}

\section{Concluding remarks}\label{sec5}

The paper proposes a modified version of the standard AR(1)
process allowing for a time-invariant dependence between the state
variable $Y_{t-1}$ and the next innovation $\xi_t$. This induces
temporal dependence in the sequence of innovations and the
standard methods of derivation of the asymptotic properties of the
estimators of parameters are no longer applicable. In fact, we
show that the standard OLS estimator, generally used for the
estimation of the autoregressive parameter, is no more consistent
and its asymptotic bias is computed. As a consequence we propose a
new infeasible estimator and we establish its
$\sqrt{T}$-asymptotic normality.

\end{document}